\newcommand{\e}{\mathrm{e}}
\newcommand{\dif}{\mathrm{d}}
\newcommand{\leb}{\mathrm{L}}
\theoremstyle{plain}
\newtheorem{theorem}{Theorem}[section] 
\newtheorem{lemma}[theorem]{Lemma} 
\newtheorem{remark}[theorem]{Remark}
\newtheorem{example}[theorem]{Example}
\numberwithin{equation}{section} 
\title[]{Improving the approximation of the first and second order statistics of the response process to the random Legendre differential equation}
\author{J. Calatayud, J.-C. Cort\'{e}s, M. Jornet} 
\begin{document}

\maketitle

\begin{center}
\noindent
\address{Instituto Universitario de Matem\'{a}tica Multidisciplinar,\\
Universitat Polit\`{e}cnica de Val\`{e}ncia,\\
Camino de Vera s/n, 46022, Valencia, Spain\\
email: jucagre@alumni.uv.es; jccortes@imm.upv.es; marjorsa@doctor.upv.es
}
\end{center}

\begin{abstract}
In this paper, we deal with uncertainty quantification for the random Legendre differential equation, with input coefficient $A$ and initial conditions $X_0$ and $X_1$. In a previous study [Calbo G. et al, Comput. Math. Appl., 61(9), 2782--2792 (2011)], a mean square convergent power series solution on $(-1/\e,1/\e)$ was constructed, under the assumptions of mean fourth integrability of $X_0$ and $X_1$, independence, and at most exponential growth of the absolute moments of $A$. In this paper, we relax these conditions to construct an $\leb^p$ solution ($1\leq p\leq\infty$) to the random Legendre differential equation on the whole domain $(-1,1)$, as in its deterministic counterpart. Our hypotheses assume no independence and less integrability of $X_0$ and $X_1$. Moreover, the growth condition on the moments of $A$ is characterized by the boundedness of $A$, which simplifies the proofs significantly. We also provide approximations of the expectation and variance of the response process. The numerical experiments show the wide applicability of our findings. A comparison with Monte Carlo simulations and gPC expansions is performed. \\
\\
\textit{Keywords: Random Legendre differential equation; Random power series, Mean square calculus; Uncertainty quantification.} \\ \\
\textit{AMS Classification 2010: 34F05; 60H10; 60H35; 65C05; 65C60; 93E03.}
\end{abstract}

\section{Introduction} \label{introduction}

Random differential equations are differential equations in which randomness appears in the coefficients, forcing term, initial conditions and/or boundary conditions. The solution is a stochastic process that solves the differential equation in some probabilistic sense, usually in the sample path or $\leb^p$ sense. For a theoretical approach to random differential equations, see \cite{soong,strand}. Uncertainty quantification consists in calculating the main statistics of the response process to the stochastic system \cite{smith}. The main methods used to deal with uncertainty quantification for random differential equations are Monte Carlo simulations \cite{MC}, gPC based stochastic Galerkin technique \cite{depen,xiu}, finite difference schemes \cite{tawil,diffusion,diffusion2,Khodabin_MCM_2011,Nouri_Mediterr_2015,Nouri_Mediterr_2016,Khodabin_AD_2015,Jerez_Steklov,Soleymani_2015}, It\^o calculus \cite{oksendal} and $\leb^p$ calculus \cite{soong,L4jc,Licea_JCAM}. In the concrete case of second-order random linear differential equations, the Fr\"obenius method has been successfully used to deal with particular equations: Airy \cite{airy}, Hermite \cite{hermite}, Legendre \cite{previous}, Bessel \cite{bessel}, etc. In \cite{homotopy_Romanian,Khudair_1,Khudair_2},  homotopy, Adomian decomposition and differential transformations techniques, respectively, have been extended to the random scenario to solve some particular second-order random linear differential equations.

In this paper, we will deal with the random Legendre differential equation:
\begin{equation} \begin{cases} (1-t^2)\ddot{X}(t)-2t\dot{X}(t)+A(A+1)X(t)=0,\;|t|<1, \\
X(0)=X_0, \\
\dot{X}(0)=X_1. 
\end{cases} 
 \label{RDE}
\end{equation}
The coefficient $A$ is a non-negative random variable and the initial conditions $X_0$ and $X_1$ are random variables. All of them are defined in a common complete probability space $(\Omega,\mathcal{F},\mathbb{P})$. In \cite{previous}, the authors constructed a mean square convergent power series solution $X(t)$ to (\ref{RDE}) on $(-1/\e,1/\e)$ under certain assumptions on the random inputs $A$, $X_0$ and $X_1$. The goal of this article is to improve \cite{previous}: to weaken the hypotheses from \cite{previous}, to simplify the proofs significantly and to obtain an $\leb^p(\Omega)$ random power series solution on the whole domain $(-1,1)$, as in the deterministic counterpart of (\ref{RDE}). Numerical examples that could not be tackled via the hypotheses from \cite{previous} will be carried out in this paper, establishing a comparison with Monte Carlo simulations and a particular version of gPC expansions \cite{depen}. 

The structure of this paper is the following. In Section~\ref{legendreRDE}, we will review the techniques used in \cite{previous}. We will relax the assumptions from \cite{previous} and we will improve the conclusions of the results. In Section~\ref{approximationEV}, we will show how to approximate the expectation and variance of the response process, under no independence assumption. In Section~\ref{experiments}, we will perform a wide variety of examples and illustrate the potentiality of our findings by comparing the numerical results with Monte Carlo simulations and gPC expansions. Section~\ref{conclusions} will draw conclusions.

\section{Random Legendre differential equation} \label{legendreRDE}

In \cite{previous}, the authors constructed a mean square power series solution to the random Legendre differential equation (\ref{RDE}) on the time interval $(-1/\e,1/\e)$. The hypotheses assumed in \cite{previous} were that the absolute moments of $A$ increased at most exponentially, that is, there exist two positive constants $H$ and $M$ such that
\begin{equation}
\mathbb{E}[|A|^n]\leq H M^n,\quad n\geq n_0;
\label{EAn}
\end{equation}
that $A$ is independent of the initial conditions $X_0$ and $X_1$; and that $X_0,X_1\in\leb^4(\Omega)$. Hypothesis (\ref{EAn}) has been of constant use in the extant literature to study significant linear random differential equations via the Fr\"obenius method: \cite{airy,hermite,previous}. In \cite{previous}, the explicit solution to (\ref{RDE}) was obtained in the form of a random power series solution by means of the Fr\"{o}benius method:
\begin{equation}
X(t)=X_0\tilde{X}_1(t)+X_1\tilde{X}_2(t)
\label{Xt}
\end{equation}
for $|t|<1/\e$, where
\begin{equation}
\tilde{X}_1(t)=\sum_{m=0}^\infty \frac{(-1)^m}{(2m)!}P_1(m)t^{2m},\quad \tilde{X}_2(t)=\sum_{m=0}^\infty \frac{(-1)^m}{(2m+1)!}P_1(m)t^{2m+1},
 \label{X1X2}
\end{equation}
\begin{equation}
P_1(m)=\prod_{k=1}^m (A-2k+2)(A+2k-1),\quad P_2(m)=\prod_{k=1}^m (A-2k+1)(A+2k).
\label{P1P2}
\end{equation}
The series in (\ref{X1X2}) were proved to be mean fourth convergent for $|t|<1/\e$. Since $X_0,X_1\in\leb^4(\Omega)$, it follows that (\ref{Xt}) is a mean square solution to (\ref{RDE}) on $(-1/\e,1/\e)$.

To summarize, the main result obtained in \cite[Th.~11]{previous} was stated as follows:
\begin{theorem} \label{thElls}
Suppose that $X_0,X_1\in\leb^4(\Omega)$, that $A$ satisfies the growth condition (\ref{EAn}), and that $A$ is independent of $X_0$ and $X_1$. Then the stochastic process defined by (\ref{Xt})--(\ref{P1P2}) is a mean square solution to the random initial value problem (\ref{RDE}) on the time domain $(-1/\e,1/\e)$.
\end{theorem}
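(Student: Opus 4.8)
The plan is to separate the problem into a genuinely analytic part and a routine assembly part. The analytic heart is to show that the two scalar random series $\tilde{X}_1$ and $\tilde{X}_2$ in (\ref{X1X2}) converge in the mean fourth sense on $(-1/\e,1/\e)$; granting this, the assembly into the mean square solution (\ref{Xt}) is straightforward. Indeed, once $\tilde{X}_1(t),\tilde{X}_2(t)\in\leb^4(\Omega)$ are available, the Cauchy--Schwarz inequality gives $\mathbb{E}[|X_0|^2|\tilde{X}_1(t)|^2]\leq\|X_0\|_{\leb^4}^2\,\|\tilde{X}_1(t)\|_{\leb^4}^2<\infty$, and similarly for the second summand, so that each term of (\ref{Xt}) lies in $\leb^2(\Omega)$; applying the same bound to the tails of the series shows that $X(t)$ is the $\leb^2(\Omega)$ limit of its partial sums. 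Independence of $A$ from $X_0,X_1$ is used here to let the relevant second moments factor (and will be essential later when the statistics of $X(t)$ are computed), although the boundedness above relies only on the $\leb^4$ hypothesis on the initial conditions.

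The main obstacle is the mean fourth convergence, and this is precisely where the growth condition (\ref{EAn}) is indispensable. I would estimate the $\leb^4$ norm of the general coefficient of $\tilde{X}_1$, namely $\|P_1(m)\|_{\leb^4}/(2m)!$. Using $|A-2k+2|\leq|A|+2k$ and $|A+2k-1|\leq|A|+2k$ (recall $A\geq0$), one bounds $|P_1(m)|\leq\prod_{k=1}^m(|A|+2k)^2$; raising to the fourth power, expanding the resulting polynomial, and taking expectations yields a finite sum of absolute moments $\mathbb{E}[|A|^n]$ with $n\leq 8m$, each weighted by a deterministic coefficient built from the integer shifts $2k$. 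Applying (\ref{EAn}) replaces every such moment by $HM^n$, so the randomness contributes only a controlled geometric factor, while the deterministic weights carry the factorial growth governed by the double factorial identities $\prod_{k=1}^m(2k-1)=(2m)!/(2^mm!)$ and $\prod_{k=1}^m(2k)=2^mm!$. A root test on $\sum_m\bigl(\|P_1(m)\|_{\leb^4}/(2m)!\bigr)|t|^{2m}$, combined with Stirling's formula (the factor $\e$ entering through the discrepancy $\lim_m(m^m/m!)^{1/m}=\e$ between $m^m$ and $m!$), then produces convergence for $|t|<1/\e$. The delicate part is the bookkeeping of constants: one must track the geometric and factorial contributions carefully enough to certify that the radius is at least $1/\e$, and the same estimate applies verbatim to the coefficients of $\tilde{X}_2$.

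With mean fourth convergence of $\tilde{X}_1,\tilde{X}_2$ secured on $(-1/\e,1/\e)$, I would invoke the calculus of $\leb^p$ random processes to justify term-by-term differentiation: on the interval of mean fourth convergence a random power series is mean fourth differentiable, with derivative obtained termwise, provided the formally differentiated series again converges in $\leb^4$. This last point follows from the coefficient estimate of the previous paragraph, since differentiation only shifts the factorial indices and does not change the outcome of the root test; hence $\tilde{X}_1,\tilde{X}_2$ admit mean fourth (a fortiori mean square) first and second derivatives, and so does $X(t)$ after multiplication by the $t$-independent factors $X_0,X_1$. Finally, substituting the series into (\ref{RDE}) and grouping equal powers of $t$, the recurrence encoded in the definition (\ref{P1P2}) of $P_1$ forces every coefficient of the resulting power series to vanish, so the equation holds in $\leb^2(\Omega)$ for each $|t|<1/\e$; evaluating at $t=0$ retains only the leading terms and gives $X(0)=X_0$ and $\dot{X}(0)=X_1$, which completes the verification that (\ref{Xt})--(\ref{P1P2}) is a mean square solution.
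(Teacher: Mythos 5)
Your proposal is essentially sound, but note first that the paper does not actually prove Theorem~\ref{thElls}: it is quoted from \cite{previous} as the result to be improved upon, and the only information given about its original proof is that it rests on H\"older's inequality, the $c_s$-inequality and the arithmetic--geometric inequality. Measured against that description, your route is genuinely different and in fact anticipates the paper's own Theorem~\ref{th}: you bound $|P_1(m)|\leq\prod_{k=1}^m(|A|+2k)^2$ directly, push the expectation through the expanded polynomial so that (\ref{EAn}) converts each $\mathbb{E}[|A|^n]$ into $HM^n$ and hence $\|P_1(m)\|_{\leb^4}\leq H^{1/4}\prod_{k=1}^m(M+2k)^2$, and then apply the root test with Stirling's formula --- which is precisely the computation the paper performs with $L=\|A\|_{\leb^\infty(\Omega)}$ in place of $M$ (the two constants being interchangeable by Lemma~\ref{equivEAn}). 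Carried out without slack, your estimate gives $\bigl(\|P_1(m)\|_{\leb^4}/(2m)!\bigr)^{1/(2m)}\rightarrow 1$, i.e.\ convergence on all of $(-1,1)$, not merely $(-1/\e,1/\e)$; the factor $\e$ you expect to lose through $\lim_m (m^m/m!)^{1/m}=\e$ only appears if one discards the exponential term in Stirling's formula, as the arithmetic--geometric step of \cite{previous} effectively does, and removing exactly that loss is the point of the present paper. Your remaining steps --- Cauchy--Schwarz to pass from $\leb^4$ convergence of $\tilde X_1,\tilde X_2$ to $\leb^2$ convergence of $X_0\tilde X_1+X_1\tilde X_2$ (correctly observing that independence is dispensable here), termwise mean square differentiation inside the interval of convergence, and verification of the Legendre recurrence together with $\tilde X_1(0)=1$, $\dot{\tilde X}_1(0)=0$, $\tilde X_2(0)=0$, $\dot{\tilde X}_2(0)=1$ --- are standard and correct; the only small caveat is that (\ref{EAn}) is assumed only for $n\geq n_0$, so the finitely many low-order moments must be absorbed into an enlarged constant before the root test is invoked.
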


Our goal is to extend this theorem and to simplify its proof given in \cite{previous}. 

The growth condition (\ref{EAn}) was established in order to demonstrate the mean fourth convergence of (\ref{X1X2}), by applying well-known inequalities: H\"older's inequality, $c_s$-inequality and arithmetic-geometric inequality. We will simplify the proof given in \cite{previous} by working with an equivalent but easier to manage form of (\ref{EAn}), see Lemma~\ref{equivEAn}. Moreover, the $\leb^\infty(\Omega)$ convergence of (\ref{X1X2}) (which implies mean fourth convergence) will be obtained on the whole interval $(-1,1)$, see Theorem~\ref{th}. This will provide the complete extension of the deterministic counterpart for the random Legendre differential equation.

\begin{lemma}\label{equivEAn}
The growth condition (\ref{EAn}) is equivalent to the boundedness of $A$: $\|A\|_{\leb^{\infty}(\Omega)}<\infty$.
\end{lemma}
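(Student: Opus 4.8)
The plan is to prove the two implications separately. The forward direction (boundedness $\Rightarrow$ (\ref{EAn})) is immediate: if $M:=\|A\|_{\leb^\infty(\Omega)}<\infty$, then $|A|\leq M$ almost surely, whence $|A|^n\leq M^n$ a.s.\ and $\mathbb{E}[|A|^n]\leq M^n$ for every $n\geq 1$. Thus (\ref{EAn}) holds with this $M$, with $H=1$, and with $n_0=1$; no growth estimate is needed, only the essential bound itself.

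For the converse, which I expect to be the main obstacle, I would argue by contradiction, making transparent why at-most-exponential growth of the moments forces an essential bound. Suppose (\ref{EAn}) holds but $\|A\|_{\leb^\infty(\Omega)}>M$. Pick a level $K$ with $M<K<\|A\|_{\leb^\infty(\Omega)}$; by the definition of the essential supremum, $\delta:=\mathbb{P}(|A|>K)>0$. The elementary tail bound $\mathbb{E}[|A|^n]\geq\mathbb{E}[|A|^n\mathbbm{1}_{\{|A|>K\}}]\geq \delta K^n$ then yields, for all $n\geq n_0$,
\[
H\geq\frac{\mathbb{E}[|A|^n]}{M^n}\geq\delta\left(\frac{K}{M}\right)^n.
\]
Since $K/M>1$, the right-hand side diverges as $n\to\infty$, contradicting the finiteness of $H$. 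Hence $\|A\|_{\leb^\infty(\Omega)}\leq M<\infty$.

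The only step requiring care is the converse, where one must extract a quantitative lower bound on the moments from a positive-probability excursion of $|A|$ above $M$; the geometric factor $(K/M)^n$ then does the rest. Equivalently, one could invoke the classical fact that on a probability space the $\leb^n(\Omega)$ norms increase to the $\leb^\infty(\Omega)$ norm, since $(\mathbb{E}[|A|^n])^{1/n}\leq H^{1/n}M\to M$ forces $\|A\|_{\leb^\infty(\Omega)}\leq M$; I prefer the self-contained tail argument above. As a byproduct, combining both directions shows that the optimal constant $M$ in (\ref{EAn}) can be taken to be exactly $\|A\|_{\leb^\infty(\Omega)}$, so the two conditions not only are equivalent but pin down the same constant.
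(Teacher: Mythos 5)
Your proof is correct. The forward direction is identical to the paper's. For the converse, you take a genuinely different route: the paper simply notes that (\ref{EAn}) gives $\|A\|_{\leb^n(\Omega)}\leq H^{1/n}M$ and then invokes the classical fact that on a probability space $\|A\|_{\leb^n(\Omega)}\to\|A\|_{\leb^\infty(\Omega)}$ as $n\to\infty$, concluding $\|A\|_{\leb^\infty(\Omega)}\leq M$ in one line. You instead argue by contradiction with an elementary tail bound: if $\mathbb{P}(|A|>K)=\delta>0$ for some $K>M$, then $\mathbb{E}[|A|^n]\geq\delta K^n$, and the resulting inequality $H\geq\delta(K/M)^n$ fails for large $n$. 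Both arguments are sound and yield the same sharp conclusion $\|A\|_{\leb^\infty(\Omega)}\leq M$ (you correctly observe this as a byproduct, and you also mention the norm-limit route as an alternative). The trade-off is that the paper's proof is shorter but leans on a standard limit theorem for $\leb^p$ norms on finite measure spaces, whereas yours is fully self-contained and makes visible exactly where exponential moment growth would be violated; for a reader who does not have that limit fact at hand, your version is arguably more transparent. No gaps.
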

\begin{proof}
If $\|A\|_{\leb^{\infty}(\Omega)}<\infty$, then $E[|A|^n]\leq \|A\|_{\leb^{\infty}(\Omega)}^n$, so that we can take $H=1$ and $M=\|A\|_{\leb^{\infty}(\Omega)}$ and (\ref{EAn}) is satisfied.

On the other hand, if (\ref{EAn}) holds, then $\|A\|_{\leb^n(\Omega)}\leq H^{1/n}M$. By taking limits, $\|A\|_{\leb^{\infty}(\Omega)}=\lim_{n\rightarrow \infty} \|A\|_{\leb^{n}(\Omega)}\leq M<\infty$.
\end{proof}

\begin{lemma} \label{absconv}
Let $X(t)=\sum_{n=0}^\infty X_n t^n$ be a formal random power series on $(-1,1)$. Let $1\leq p\leq\infty$. Then the given series converges in $\leb^p(\Omega)$ for all $t\in (-1,1)$, if and only if $\sum_{n=0}^\infty \|X_n\|_{\leb^p(\Omega)}|t|^n<\infty$ for all $t\in (-1,1)$.
\end{lemma}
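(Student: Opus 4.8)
The plan is to prove the two implications separately, using throughout that $\leb^p(\Omega)$ is a Banach space for every $1\leq p\leq\infty$, so that the partial sums of $\sum_{n=0}^\infty X_n t^n$ live in a complete normed space and the usual Banach-space criteria for series apply.

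For the \emph{if} direction I would assume $\sum_{n=0}^\infty \|X_n\|_{\leb^p(\Omega)}|t|^n<\infty$ and show that the partial sums $S_N=\sum_{n=0}^N X_n t^n$ form a Cauchy sequence in $\leb^p(\Omega)$. Indeed, for $M>N$ the triangle inequality gives $\|S_M-S_N\|_{\leb^p(\Omega)}\leq \sum_{n=N+1}^M \|X_n\|_{\leb^p(\Omega)}|t|^n$, which is a tail of a convergent numerical series and hence tends to $0$. By completeness of $\leb^p(\Omega)$ the sequence $(S_N)_N$ converges, i.e. the random power series converges in $\leb^p(\Omega)$ at $t$. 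This direction is routine and amounts to the fact that absolute convergence implies convergence in a Banach space.

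The \emph{only if} direction is the crux, since in a general Banach space convergence of a series does \emph{not} force convergence of the series of norms. The device is to exploit that convergence is assumed on the \emph{whole} interval $(-1,1)$ rather than at a single point, which is exactly the situation of Abel's classical argument for the radius of convergence. Fix $t\in(-1,1)$; the case $t=0$ is trivial, so assume $t\neq 0$ and choose $t_0$ with $|t|<t_0<1$. By hypothesis $\sum_{n=0}^\infty X_n t_0^n$ converges in $\leb^p(\Omega)$, so its general term tends to zero: $\|X_n t_0^n\|_{\leb^p(\Omega)}=\|X_n\|_{\leb^p(\Omega)}\,t_0^n\to 0$ as $n\to\infty$. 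In particular this sequence is bounded, say $\|X_n\|_{\leb^p(\Omega)}\,t_0^n\leq C$ for all $n$, and then
\begin{equation*}
\sum_{n=0}^\infty \|X_n\|_{\leb^p(\Omega)}|t|^n=\sum_{n=0}^\infty \left(\|X_n\|_{\leb^p(\Omega)}\,t_0^n\right)\left(\frac{|t|}{t_0}\right)^n\leq C\sum_{n=0}^\infty \left(\frac{|t|}{t_0}\right)^n<\infty,
\end{equation*}
since $|t|/t_0<1$ yields a convergent geometric series.

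The main obstacle is precisely this second implication: one cannot argue at the target $t$ alone, because absolute convergence is strictly stronger than convergence in $\leb^p(\Omega)$. What resolves it is enlarging the radius slightly to $t_0$, using that the general term of a convergent series vanishes (hence is bounded in norm), and then dominating the series of norms at $t$ by a geometric series of ratio $|t|/t_0<1$. Completeness of $\leb^p(\Omega)$ is needed for the easy converse implication but not for this one.
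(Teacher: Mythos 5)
Your proof is correct and follows essentially the same route as the paper: the easy direction via completeness of $\leb^p(\Omega)$, and the converse via Abel's trick of evaluating at a slightly larger radius $t_0$, noting the general term $\|X_n\|_{\leb^p(\Omega)}t_0^n$ is bounded, and comparing with the geometric series of ratio $|t|/t_0<1$. The only cosmetic difference is that you bound the terms by a constant $C$ for all $n$ while the paper bounds them by $1$ for $n\geq n_0$; both are immaterial variants of the same argument.
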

\begin{proof}
If $\sum_{n=0}^\infty \|X_n\|_{\leb^p(\Omega)}|t|^n<\infty$ for all $t\in (-1,1)$, then the series converges in $\leb^p(\Omega)$ for all $t\in (-1,1)$, because in a Banach space, absolute convergence of a series implies convergence.

On the other hand, suppose that the series converges in $\leb^p(\Omega)$ for all $t\in (-1,1)$. Fix $|t_0|<1$. Let $|t_0|<|\rho|<1$. Since $\sum_{n=0}^\infty \|X_n\|_{\leb^p(\Omega)}|\rho|^n<\infty$, then $\|X_{n}\|_{\leb^p(\Omega)}|\rho|^n\leq 1$, for $n\geq n_0$. Thus, $\|X_{n}\|_{\leb^p(\Omega)}|t_0|^n\leq (|t_0|/|\rho|)^n$, for $n\geq n_0$, with $\sum_{n=0}^{\infty} (|t_0|/|\rho|)^n<\infty$. By comparison, $\sum_{n=0}^\infty \|X_n\|_{\leb^p(\Omega)}|t_0|^n<\infty$.
\end{proof}

We state and proof our main Theorem~\ref{th}. It is a significant improvement of Theorem~(\ref{thElls}) stated and proved in \cite{previous}: for $p=2$, we only require mean square integrability of $X_0$ and $X_1$, not mean fourth integrability; we do not need any independence assumption on $A$, $X_0$ and $X_1$; and we demonstrate mean square convergence of the series on the whole interval $(-1,1)$, not just $(-1/\e,1/\e)$. Moreover, our proof is much simpler, because the hypothesis of boundedness for $A$ instead of the equivalent growth condition (\ref{EAn}) allows simpler and more direct inequalities (we do not need H\"older's inequality, $c_s$-inequality, arithmetic-geometric inequality, etc.).

\begin{theorem} \label{th}
Suppose that $X_0,X_1\in\leb^p(\Omega)$, for certain $1\leq p\leq\infty$, and $\|A\|_{\leb^\infty(\Omega)}<\infty$. Then the stochastic process defined by (\ref{Xt})--(\ref{P1P2}) is the unique $\leb^p(\Omega)$ solution to the random initial value problem (\ref{RDE}) on the whole time domain $(-1,1)$.
\end{theorem}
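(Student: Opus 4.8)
The plan is to prove convergence of the two series in $\leb^p(\Omega)$ via Lemma~\ref{absconv}, then verify that the formal series actually solves the ODE, and finally establish uniqueness. Since $A$ is bounded, write $a=\|A\|_{\leb^\infty(\Omega)}<\infty$. The crucial estimate is on $\|P_1(m)\|_{\leb^p(\Omega)}$ and $\|P_2(m)\|_{\leb^p(\Omega)}$. Because $P_1(m)=\prod_{k=1}^m (A-2k+2)(A+2k-1)$ is a product of random variables each bounded in absolute value by $a+2k$ (pointwise in $\omega$), we get the pointwise bound $|P_1(m)|\leq \prod_{k=1}^m (a+2k-2)(a+2k-1)$, and hence $\|P_1(m)\|_{\leb^p(\Omega)}\leq \prod_{k=1}^m (a+2k-2)(a+2k-1)$, a deterministic number. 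This is exactly where boundedness of $A$ replaces the H\"older/$c_s$/AM--GM machinery of \cite{previous}: the $\leb^\infty$ bound passes through products trivially.

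First I would feed these deterministic bounds into Lemma~\ref{absconv}. For $\tilde X_1$ the relevant series is $\sum_{m=0}^\infty \frac{1}{(2m)!}\Big(\prod_{k=1}^m (a+2k-2)(a+2k-1)\Big)|t|^{2m}$, and similarly for $\tilde X_2$. I would apply the ratio test: the ratio of consecutive terms is $\frac{(2m)!}{(2m+2)!}(a+2m)(a+2m+1)|t|^2=\frac{(a+2m)(a+2m+1)}{(2m+1)(2m+2)}|t|^2$, which tends to $|t|^2<1$ as $m\to\infty$ for every fixed $t\in(-1,1)$. Thus both deterministic majorant series converge on all of $(-1,1)$, so by Lemma~\ref{absconv} the series $\tilde X_1(t)$ and $\tilde X_2(t)$ converge in $\leb^p(\Omega)$ on $(-1,1)$; and since $X_0,X_1\in\leb^p(\Omega)$ with $a<\infty$, the combination $X(t)=X_0\tilde X_1(t)+X_1\tilde X_2(t)$ lies in $\leb^p(\Omega)$ for each $t\in(-1,1)$. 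The ratio-test radius being exactly $1$ is what upgrades the domain from $(-1/\e,1/\e)$ to the full $(-1,1)$.

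Next I would verify that $X(t)$ solves \eqref{RDE} in the $\leb^p$ sense. The coefficients $P_1(m)$ are, by construction via the Fr\"obenius recurrence, exactly those making the formal power series a solution of the Legendre equation; I would argue that $\leb^p$ power series can be differentiated term by term inside the interval of convergence (the differentiated majorant series has the same radius $1$ by the same ratio-test computation, so term-by-term $\leb^p$ differentiation is justified by the standard uniform-on-compact-subintervals argument). Substituting the term-by-term derivatives into $(1-t^2)\ddot X-2t\dot X+A(A+1)X$ and collecting powers of $t$, each coefficient vanishes by the defining recurrence for $P_1,P_2$; the initial conditions $X(0)=X_0$, $\dot X(0)=X_1$ follow by evaluating the series and its derivative at $t=0$. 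Uniqueness follows because any two $\leb^p$ solutions have a difference that is an $\leb^p$ solution with zero initial data, and the Fr\"obenius recurrence determines all coefficients uniquely from $X_0,X_1$, forcing the difference to be identically zero.

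The main obstacle I anticipate is the rigorous justification of term-by-term differentiation in $\leb^p(\Omega)$ and the precise sense in which \eqref{RDE} holds, since $(1-t^2)$, $-2t$ and the random factor $A(A+1)$ must multiply $\leb^p$-convergent series; the deterministic factors cause no trouble, but multiplying by the bounded random variable $A(A+1)$ and exchanging it with the $\leb^p$ limit requires noting that multiplication by an $\leb^\infty(\Omega)$ random variable is a bounded operator on $\leb^p(\Omega)$, so it commutes with $\leb^p$ limits and series. Everything else reduces to the single clean deterministic majorant bound, which is precisely the simplification promised over \cite{previous}.
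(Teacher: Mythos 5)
Your convergence argument is correct and in fact takes a cleaner route than the paper's. Both proofs hinge on the same key observation -- that boundedness of $A$ gives the deterministic majorant $\|P_1(m)\|_{\leb^\infty(\Omega)}\leq \prod_{k=1}^m (a+2k-2)(a+2k-1)$, after which Lemma~\ref{absconv} reduces everything to a numerical series -- but where the paper then rewrites the product with Gamma functions and applies the root test together with Stirling's formula, you apply the ratio test directly: the ratio $\frac{(a+2m)(a+2m+1)}{(2m+1)(2m+2)}|t|^2\to|t|^2<1$ immediately gives convergence on all of $(-1,1)$. This avoids the Gamma-function bookkeeping entirely and is a genuine simplification of that step. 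One small remark: the paper works with $\|P_1(m)\|_{\leb^\infty(\Omega)}$ (so that the series for $\tilde X_1,\tilde X_2$ converge in $\leb^\infty(\Omega)$, and multiplication by $X_0,X_1\in\leb^p(\Omega)$ then lands in $\leb^p(\Omega)$ by H\"older); your phrasing in terms of $\|P_1(m)\|_{\leb^p(\Omega)}$ works for the convergence of $\tilde X_1,\tilde X_2$ themselves but, since $X_0$ is only in $\leb^p$, you do need the $\leb^\infty$ control of the tails of $\tilde X_1$ to conclude that $X_0\tilde X_1(t)$ converges in $\leb^p$; your deterministic bound gives this for free, so this is a presentational point rather than an error. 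Your explicit discussion of term-by-term differentiation and of multiplication by the $\leb^\infty$ variable $A(A+1)$ being a bounded operator on $\leb^p(\Omega)$ is a correct justification of a step the paper leaves implicit.

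The genuine gap is in your uniqueness argument. You argue that the difference of two $\leb^p$ solutions has zero initial data and that ``the Fr\"obenius recurrence determines all coefficients uniquely,'' but this only rules out other \emph{power-series} solutions: an arbitrary $\leb^p$ solution of \eqref{RDE} is not assumed to be representable as a random power series, so there are no ``coefficients'' to which the recurrence applies. The paper closes this gap differently: it rewrites \eqref{RDE} as the first-order system $\dot Z(t)=B(t)Z(t)$ with
\[ B(t)=\begin{pmatrix} 0 & 1 \\ \tfrac{A(A+1)}{1-t^2} & \tfrac{-2t}{1-t^2} \end{pmatrix}, \]
checks that $\|B(t)Z-B(t)Z'\|_{\leb^p_2(\Omega)}\leq |||B(t)|||\cdot\|Z-Z'\|_{\leb^p_2(\Omega)}$ with $\int_{-a}^{a}|||B(t)|||\,\dif t<\infty$ for every $a\in(0,1)$ (this is where $\|A\|_{\leb^\infty(\Omega)}<\infty$ is used again), and then invokes the Picard-type existence--uniqueness theorem for random linear systems from Soong and Strand. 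To repair your proof you would need either this Lipschitz/Gr\"onwall-type argument or some other device that applies to all $\leb^p$ solutions, not only analytic ones.
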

\begin{proof}
From (\ref{Xt}) and $X_0,X_1\in\leb^p(\Omega)$, it suffices to see that the two series given in (\ref{X1X2}) converge in $\leb^\infty(\Omega)$ for $t\in (-1,1)$. That is,
\begin{equation} 
\sum_{m=0}^\infty \frac{1}{(2m)!}\|P_1(m)\|_{\leb^\infty(\Omega)}|t|^{2m}<\infty,\quad \sum_{m=0}^\infty \frac{1}{(2m+1)!}\|P_1(m)\|_{\leb^\infty(\Omega)}|t|^{2m+1}<\infty, 
 \label{abso}
\end{equation}
for $t\in (-1,1)$ (see Lemma~\ref{absconv}). We will check (\ref{abso}) for the first series, as for the second one the reasoning is completely analogous.

Let $L=\|A\|_{\leb^\infty(\Omega)}$. We have
\small                 
\begin{align*}
 \|P_1(m)\|_{\leb^\infty(\Omega)} = {} &  \left\| \prod_{k=1}^m (A-2k+2)(A+2k-1) \right\|_{\leb^\infty(\Omega)}\leq \prod_{k=1}^m (L+2k-2)(L+2k-1) \\
\leq {} & \prod_{k=1}^m (L+2k-1)^2=\left(\frac{\prod_{k=1}^{2m-1} (L+k)}{\prod_{k=1}^{m-1}(L+2k)}\right)^2=\left(\frac{(L+2m-1)!}{L!\prod_{k=1}^{m-1}(L+2k)}\right)^2 \\
= {} & \left(\frac{(L+2m-1)!}{L!2^{m-1}\prod_{k=1}^{m-1} (L/2+k)}\right)^2=\left(\frac{(L+2m-1)!\Gamma(L/2+1)}{L!2^{m-1}\Gamma(L/2+m)}\right)^2,
\end{align*}
\normalsize
where the property $\Gamma(x)=(x-1)\Gamma(x-1)$ of the Gamma function $\Gamma(z)=\int_0^\infty x^{z-1}\e^{-x}\,\dif x$ has been used. By the root test, if we check that 
\[ \lim_{m\rightarrow\infty} \left(\frac{(L+2m-1)!\Gamma(L/2+1)}{L!2^{m-1}\Gamma(L/2+m)(2m)!^\frac12}\right)^{2/m}=1, \]
then the first part of (\ref{abso}) will follow. By Stirling's formula, $\Gamma(x)\sim \sqrt{2\pi x}(\frac{x-1}{\e})^{x-1}$ as $x\rightarrow\infty$. Then
\begin{align*} 
{} &  \lim_{m\rightarrow\infty} \left(\frac{(L+2m-1)!\Gamma(L/2+1)}{L!2^{m-1}\Gamma(L/2+m)(2m)!^\frac12}\right)^{2/m} \\
= {} &  \lim_{m\rightarrow\infty} \left(\frac{\sqrt{2\pi (L+2m-1)}\left(\frac{L+2m-1}{\e}\right)^{L+2m-1} \Gamma(L/2+1)}{L!2^{m-1}\sqrt{2\pi(L/2+m)}\left(\frac{L/2+m-1}{\e}\right)^{L/2+m-1}\sqrt[4]{4\pi m}\left(\frac{2m}{\e}\right)^m}\right)^{2/m} \\
= {} &  \lim_{m\rightarrow\infty} \frac{\left(\frac{L+2m-1}{\e}\right)^{4}}{4\left(\frac{L/2+m-1}{\e}\right)^2 \left(\frac{2m}{\e}\right)^2}=1.
\end{align*}
As a conclusion, the stochastic process defined by (\ref{Xt})--(\ref{P1P2}) is an $\leb^p(\Omega)$ solution to (\ref{RDE}) on $(-1,1)$.

To demonstrate the uniqueness, we use \cite[Th.~5.1.2]{soong}, \cite[Th.~5]{strand}. Rewrite (\ref{RDE}) as $\dot{Z}(t)=B(t)Z(t)$, where 
\[ Z(t)=\begin{pmatrix} X(t) \\ \dot{X}(t) \end{pmatrix},\quad B(t)=\begin{pmatrix} 0 & 1 \\ \frac{A(A+1)}{1-t^2} & \frac{-2t}{1-t^2} \end{pmatrix}. \]
We say that $Z=(Z_1,Z_2)$ belongs to $\leb^p_2(\Omega)$ if $\|Z\|_{\leb^p_2(\Omega)}:=\max\{\|Z_1\|_{\leb^p(\Omega)},\|Z_2\|_{\leb^p(\Omega)}\}<\infty$. Consider the random matrix norm $|||B|||:=\max_i \sum_j \|b_{ij}\|_{\leb^\infty(\Omega)}$. If $Z,Z'\in\leb^p_2(\Omega)$, then $\|B(t)Z-B(t)Z'\|_{\leb^p_2(\Omega)}\leq |||B(t)|||\cdot\|Z-Z'\|_{\leb^p_2(\Omega)}$, where 
\[ \int_{-a}^a |||B(t)|||\,\dif t=\int_{-a}^a \frac{\|A\|_{\leb^\infty(\Omega)}(\|A\|_{\leb^\infty(\Omega)}+1)+2|t|}{1-t^2}\,\dif t<\infty\] 
for each $a\in (0,1)$. Then the assumptions of \cite[Th.~5.1.2]{soong}, \cite[Th.~5]{strand} hold.
\end{proof}

The hypothesis $\|A\|_{\leb^\infty(\Omega)}<\infty$ is satisfied by some standard probability distributions: Uniform, Beta, Binomial, etc. If one wants $A$ to follow an unbounded distribution, the truncation method permits bounding the support of $A$ (see \cite{trunc}). For example, the truncated Normal or Gamma distributions can be given to $A$. See Example~\ref{ex3} for a test of this methodology.

\begin{remark} \normalfont
If $\|A\|_{\leb^\infty(\Omega)}=\infty$, then (\ref{abso}) does not hold for any $t\in (-1,1)\backslash\{0\}$. Indeed, 
\[ \sum_{m=0}^\infty \frac{1}{(2m)!}\|P_1(m)\|_{\leb^\infty(\Omega)}|t|^{2m}\geq \frac12 \|P_1(1)\|_{\leb^\infty(\Omega)} t^2=\frac12 \|A(A+1)\|_{\leb^\infty(\Omega)} t^2=\infty. \]
By Lemma~\ref{absconv}, the two series given in (\ref{X1X2}) do not converge in $\leb^\infty(\Omega)$, for any $t\in (-1,1)\backslash\{0\}$.
\end{remark}

\begin{remark} \normalfont
If $X_0,X_1,A\in\leb^\infty(\Omega)$, then the response process $X(t)$ defined by (\ref{Xt})--(\ref{P1P2}) is the unique $\leb^\infty(\Omega)$ solution to (\ref{RDE}) on $(-1,1)$. In particular, $X(t)$ is the unique solution in the sample path sense \cite[Appendix~A]{soong}.
\end{remark}

\section{Approximation of the expectation and variance of the response process} \label{approximationEV}

Let $X_0,X_1\in\leb^2(\Omega)$ and $A$ be a bounded random variable, not necessarily independent. By Theorem~\ref{th}, the stochastic process $X(t)$ defined by (\ref{Xt})--(\ref{P1P2}) is an $\leb^2(\Omega)$ solution to the random initial value problem (\ref{RDE}) on the whole time domain $(-1,1)$. If we consider $X^M(t)=X_0 \tilde{X}_1^M(t)+X_1\tilde{X}_2^M(t)$, where
\[ \tilde{X}_1^M(t)=\sum_{m=0}^{\lfloor \frac{M}{2} \rfloor} \frac{(-1)^m}{(2m)!}P_1(m)t^{2m},\quad \tilde{X}_2^M(t)=\sum_{m=0}^{\lfloor \frac{M-1}{2} \rfloor} \frac{(-1)^m}{(2m+1)!}P_2(m)t^{2m+1}, \]
we know that $X^M(t)\rightarrow X(t)$ in $\leb^2(\Omega)$ as $M\rightarrow\infty$, for each $t\in (-1,1)$. This mean square convergence allows us to approximate the expectation and variance of $X(t)$ by using 
\begin{equation}
 \mathbb{E}[X(t)]=\lim_{M\rightarrow\infty} \mathbb{E}[X^M(t)],\quad \mathbb{V}[X(t)]=\lim_{M\rightarrow\infty} \mathbb{V}[X^M(t)], 
 \label{approxEV}
\end{equation}
see \cite[Th.~4.2.1, Th.~4.3.1]{soong}.

The expectation of $X^M(t)$ is given by
\[ \mathbb{E}[X^M(t)]=\sum_{m=0}^{\lfloor \frac{M}{2} \rfloor} \frac{(-1)^m}{(2m)!}\mathbb{E}[X_0P_1(m)]t^{2m}+\sum_{m=0}^{\lfloor \frac{M-1}{2} \rfloor} \frac{(-1)^m}{(2m+1)!}\mathbb{E}[X_1P_2(m)]t^{2m+1}, \]
where 
\[ \mathbb{E}[X_0 P_1(m)]=\int_{(0,\infty)\times\mathbb{R}} x_0\left(\prod_{j=1}^m (a-2j+2)(a+2j-1)\right)\mathbb{P}_{(A,X_0)}(\dif a,\dif x_0), \]
\[ \mathbb{E}[X_1 P_2(m)]=\int_{(0,\infty)\times\mathbb{R}} x_1\left(\prod_{j=1}^m (a-2j+1)(a+2j)\right)\mathbb{P}_{(A,X_1)}(\dif a,\dif x_1). \]
Here, $\mathbb{P}_Z$ represents the probability law of the random vector $Z$, which comprises the different cases of absolute continuity, discrete support, etc.

On the other hand, the variance of $X_M(t)$ is given by
\[ \mathbb{V}[X_M(t)]=\mathbb{E}[X_M(t)^2]-\left(\mathbb{E}[X_M(t)]\right)^2, \]
so that we need to compute $\mathbb{E}[X_M(t)^2]$. Let 
\[ X_{2m}=X_0 \frac{(-1)^m}{(2m)!}P_1(m),\quad X_{2m+1}=X_1 \frac{(-1)^m}{(2m+1)!}P_2(m). \]
We have
\begin{align*} 
\mathbb{E}[X_M(t)^2]= {} &  \mathbb{E}\left[\left(\sum_{m=0}^{\lfloor \frac{M}{2} \rfloor} X_{2m}t^{2m}\right)^2\right]+\mathbb{E}\left[\left(\sum_{m=0}^{\lfloor \frac{M-1}{2} \rfloor} X_{2m+1}t^{2m+1}\right)^2\right] \\
+ {} &  2\sum_{m=0}^{\lfloor \frac{M}{2} \rfloor}\sum_{n=0}^{\lfloor \frac{M-1}{2} \rfloor} \mathbb{E}[X_{2m}X_{2n+1}]t^{2(m+n)+1}, 
\end{align*}
where 
\[ \mathbb{E}\left[\left(\sum_{m=0}^{\lfloor \frac{M}{2} \rfloor} X_{2m}t^{2m}\right)^2\right]=\sum_{m=0}^{\lfloor \frac{M}{2} \rfloor} \sum_{n=0}^{\lfloor \frac{M}{2} \rfloor} \mathbb{E}[X_{2m}X_{2n}]t^{2(m+n)}, \]
\[ \mathbb{E}\left[\left(\sum_{m=0}^{\lfloor \frac{M-1}{2} \rfloor} X_{2m+1}t^{2m+1}\right)^2\right]=\sum_{m=0}^{\lfloor \frac{M-1}{2} \rfloor}\sum_{n=0}^{\lfloor \frac{M-1}{2} \rfloor}\mathbb{E}[X_{2m+1}X_{2n+1}]t^{2(m+n)+2}. \]
The expectations involved in these expressions can be computed as follows:
\small
\begin{align*} 
\mathbb{E}[X_{2m}X_{2n}]= {} &  \frac{(-1)^{m+n}}{(2m)!(2n)!}\mathbb{E}[X_0^2 P_1(m)P_1(n)] \\
= {} &  \frac{(-1)^{m+n}}{(2m)!(2n)!}\int_{(0,\infty)\times\mathbb{R}} x_0^2 \left(\prod_{j=1}^m (a-2j+2)(a+2j-1)\right) \\
\cdot {} &  \left(\prod_{j=1}^n (a-2j+2)(a+2j-1)\right)\mathbb{P}_{(A,X_0)}(\dif a,\dif x_0), 
\end{align*}
\begin{align*} 
\mathbb{E}[X_{2m+1}X_{2n+1}]= {} &  \frac{(-1)^{m+n}}{(2m+1)!(2n+1)!}\mathbb{E}[X_1^2 P_2(m)P_2(n)] \\
= {} &  \frac{(-1)^{m+n}}{(2m+1)!(2n+1)!}\int_{(0,\infty)\times\mathbb{R}} x_1^2 \left(\prod_{j=1}^m (a-2j+1)(a+2j)\right) \\
\cdot {} &  \left(\prod_{j=1}^n (a-2j+1)(a+2j)\right)\mathbb{P}_{(A,X_1)}(\dif a,\dif x_1), 
\end{align*}
\begin{align*} 
\mathbb{E}[X_{2m}X_{2n+1}]= {} &  \frac{(-1)^{m+n}}{(2m)!(2n+1)!}\mathbb{E}[X_0 X_1 P_1(m)P_2(n)] \\
= {} &  \frac{(-1)^{m+n}}{(2m)!(2n)!}\int_{(0,\infty)\times\mathbb{R}\times\mathbb{R}} x_0 x_1 \left(\prod_{j=1}^m (a-2j+2)(a+2j-1)\right) \\
\cdot {} &  \left(\prod_{j=1}^n (a-2j+1)(a+2j)\right)\mathbb{P}_{(A,X_0,X_1)}(\dif a,\dif x_0,\dif x_1). 
\end{align*}
\normalsize

\section{Numerical experiments} \label{experiments}

In this section we perform several numerical experiments. Since in \cite{previous} the authors carried out numerical examples when $A$, $X_0$ and $X_1$ are independent random variables, we will show three more examples in which $A$, $X_0$ and $X_1$ are not independent. To assess the reliability of the approximations obtained for the expectation and variance by using (\ref{approxEV}), we will compare them with Monte Carlo simulations and a generalized Polynomial Chaos (gPC) approach. 

Monte Carlo simulations generate samples of $X(t)$ by computing realizations of $A$, $X_0$ and $X_1$ and solving the corresponding deterministic problem (\ref{RDE}). Although it is an effective and easy to implement approach to quantify the uncertainty, the slowness to get accurately the digits in the computations makes this technique computationally expensive, \cite{MC}, \cite[pp.~53--54]{xiu}.

Our gPC approach is based on the computational algorithm presented in \cite{depen}, which works when the random input parameters are non-independent and jointly absolutely continuous. Due to the spectral convergence of the Galerkin projections in $\leb^2(\Omega)$ \cite{xiu,ernst,shi,nostre}, for small orders of bases $m$ (see \cite{depen}) the approximations for the expectation and variance are very accurate, especially for small $t$. However, increasing the order $m$ of the bases may entail numerical errors, see \cite{shi,nostre} and Example~\ref{ex3}.

\begin{example} \label{ex1} \normalfont

We consider the random differential equation (\ref{RDE}) with 
\[ (A,X_0,X_1)\sim\text{Dirichlet}(5,1,2,3). \]
Since $X_0$, $X_1$ and $A$ are bounded random variables, Theorem~\ref{th} implies that the stochastic process $X(t)$ defined by (\ref{Xt})--(\ref{P1P2}) is the unique $\leb^\infty(\Omega)$ solution to (\ref{RDE}) on $(-1,1)$. In Table~\ref{exp1}, we show $\mathbb{E}[X^M(t)]$ for different orders $M$, which approximates $\mathbb{E}[X(t)]$ by (\ref{approxEV}). We observe that the approximations achieved are more accurate for small $M$ when $t$ is near $0$, because the random power series is centered at $0$ and the process $X(t)$ is known at $0$. For $t\leq 0.8$, stabilization of the results has been achieved for $M=80$. For $t=0.9$, a larger $M$ would be needed. We notice that Monte Carlo simulations with $500,000$ realizations give an approximate result up to three significant figures. To obtain more exact approximations, more simulations and computational cost are needed. In general, the approximations via Monte Carlo simulations are worse than via our Fr\"obenius method. Concerning gPC approximations, the results obtained are as accurate as via the Fr\"obenius method, due to its spectral convergence. Table~\ref{var2} provides analogous results for the variance, where $\mathbb{V}[X^M(t)]$ approximates $\mathbb{V}[X(t)]$ by (\ref{approxEV}). For $t\leq 0.7$ stabilization of the approximations has been reached for $M=80$. In general, a larger $M$ is required to achieve nearly exact approximations for the variance. The results obtained agree with Monte Carlo simulations and gPC expansions.

\begin{table}[hbtp!]
\begin{center}
\begin{tabular}{|c|c|c|c|c|c|c|} \hline
$t$ & $\mathbb{E}[X^{10}(t)]$ & $\mathbb{E}[X^{20}(t)]$ & $\mathbb{E}[X^{40}(t)]$ & $\mathbb{E}[X^{80}(t)]$ & MC $500,000$ & gPC $m=3$  \\ \hline
$0$ & $0.0909091$ & $0.0909091$ & $0.0909091$ & $0.0909091$ & $0.0906787$ & $0.0909091$ \\ \hline
$0.1$ & $0.108855$ & $0.108855$ & $0.108855$ & $0.108855$ & $0.108648$ & $0.108855$ \\ \hline
$0.2$ & $0.126491$ & $0.126491$ & $0.126491$ & $0.126491$ & $0.126308$ & $0.126491$ \\ \hline
$0.3$ & $0.144059$ & $0.144059$ & $0.144059$ & $0.144059$ & $0.143903$ & $0.144059$ \\ \hline
$0.4$ & $0.161835$ & $0.161835$ & $0.161835$ & $0.161835$ & $0.161709$ & $0.161835$ \\ \hline
$0.5$ & $0.180166$ & $0.180172$ & $0.180172$ & $0.180172$ & $0.180080$ & $0.180172$ \\ \hline
$0.6$ & $0.199548$ & $0.199591$ & $0.199592$ & $0.199592$ & $0.199540$ & $0.199592$ \\ \hline
$0.7$ & $0.220733$ & $0.220998$ & $0.221002$ & $0.221002$ & $0.221000$ & $0.221002$ \\ \hline
$0.8$ & $0.24491$ & $0.246266$ & $0.246352$ & $0.246352$ & $0.246416$ & $0.246352$ \\ \hline
$0.9$ & $0.273962$ & $0.280100$ & $0.281585$ & $0.281693$ & $0.281863$ & $0.281694$ \\ \hline
\end{tabular}
\caption{Approximation of the expectation of the solution stochastic process. Example \ref{ex1}.}
\label{exp1}
\end{center}
\end{table}

\begin{table}[hbtp!]
\begin{center}
\begin{tabular}{|c|c|c|c|c|c|c|} \hline
$t$ & $\mathbb{V}[X^{10}(t)]$ & $\mathbb{V}[X^{20}(t)]$ & $\mathbb{V}[X^{40}(t)]$ & $\mathbb{V}[X^{80}(t)]$ & MC $500,000$ & gPC $m=3$  \\ \hline
$0$ & $0.00688705$ & $0.00688705$ & $0.00688705$ & $0.00688705$ & $0.00685105$ & $0.00688705$ \\ \hline
$0.1$ & $0.00670461$ & $0.00670461$ & $0.00670461$ & $0.00670461$ & $0.00666882$ & $0.00670461$ \\ \hline
$0.2$ & $0.00672130$ & $0.00672130$ & $0.00672130$ & $0.00672130$ & $0.00668621$ & $0.00672130$ \\ \hline
$0.3$ & $0.00697044$ & $0.00697045$ & $0.00697045$ & $0.00697045$ & $0.00693658$ & $0.00697045$ \\ \hline
$0.4$ & $0.00751088$ & $0.00751091$ & $0.00751091$ & $0.00751091$ & $0.00747887$ & $0.00751091$ \\ \hline
$0.5$ & $0.00844437$ & $0.00844482$ & $0.00844482$ & $0.00844482$ & $0.00841536$ & $0.00844482$ \\ \hline
$0.6$ & $0.00995308$ & $0.00995823$ & $0.00995825$ & $0.00995825$ & $0.00993237$ & $0.00995825$ \\ \hline
$0.7$ & $0.0123829$ & $0.0124269$ & $0.0124276$ & $0.0124276$ & $0.0124068$ & $0.0124276$ \\ \hline
$0.8$ & $0.0164346$ & $0.0167508$ & $0.0167712$ & $0.0167714$ & $0.0167582$ & $0.0167714$ \\ \hline
$0.9$ & $0.0236175$ & $0.0256974$ & $0.0262304$ & $0.0262699$ & $0.0262712$ & $0.0262702$ \\ \hline
\end{tabular}
\caption{Approximation of the variance of the solution stochastic process. Example \ref{ex1}.}
\label{var1}
\end{center}
\end{table}

\end{example}

\begin{example} \label{ex2} \normalfont

We set a joint discrete distribution to $(A,X_0,X_1)$:
\[ (A,X_0,X_1)\sim \text{Multinomial}(10;0.2,0.3,0.5). \]
Since $X_0$, $X_1$ and $A$ are bounded random variables, Theorem~\ref{th} entails that the response process $X(t)$ defined by (\ref{Xt})--(\ref{P1P2}) is the unique $\leb^\infty(\Omega)$ solution to (\ref{RDE}) on $(-1,1)$. Expression (\ref{approxEV}) allows approximating $\mathbb{E}[X(t)]$ and $\mathbb{V}[X(t)]$ via $\mathbb{E}[X^M(t)]$ and $\mathbb{V}[X^M(t)]$, respectively. Analogous comments to the previous example apply here, and the results are presented in Table~\ref{exp2} and Table~\ref{var2}. We point out that, since $(A,X_0,X_1)$ is discrete, the computational method from \cite{depen} to apply gPC expansions does not work in this case. The results obtained via our Fr\"obenius method are accurate. 

\begin{table}[hbtp!]
\begin{center}
\begin{tabular}{|c|c|c|c|c|c|} \hline
$t$ & $\mathbb{E}[X^{10}(t)]$ & $\mathbb{E}[X^{20}(t)]$ & $\mathbb{E}[X^{40}(t)]$ & $\mathbb{E}[X^{80}(t)]$ & MC $500,000$  \\ \hline
$0$ & $3$ & $3$ & $3$ & $3$ & $3.00207$ \\ \hline
$0.1$ & $3.39965$ & $3.39965$ & $3.39965$ & $3.39965$ & $3.40154$ \\ \hline
$0.2$ & $3.59067$ & $3.59067$ & $3.59067$ & $3.59067$ & $3.59226$ \\ \hline
$0.3$ & $3.57194$ & $3.57194$ & $3.57194$ & $3.57194$ & $3.57322$ \\ \hline
$0.4$ & $3.35661$ & $3.35661$ & $3.35661$ & $3.35661$ & $3.35768$ \\ \hline
$0.5$ & $2.97154$ & $2.97154$ & $2.97154$ & $2.97154$ & $2.97259$ \\ \hline
$0.6$ & $2.45625$ & $2.45623$ & $2.45623$ & $2.45623$ & $2.45738$ \\ \hline
$0.7$ & $1.86122$ & $1.86112$ & $1.86111$ & $1.86111$ & $1.86226$ \\ \hline
$0.8$ & $1.24584$ & $1.24523$ & $1.24515$ & $1.24515$ & $1.24558$ \\ \hline
$0.9$ & $0.675881$ & $0.672543$ & $0.670722$ & $0.670550$ & $0.668041$ \\ \hline
\end{tabular}
\caption{Approximation of the expectation of the solution stochastic process. Example \ref{ex2}.}
\label{exp2}
\end{center}
\end{table}

\begin{table}[hbtp!]
\begin{center}
\begin{tabular}{|c|c|c|c|c|c|} \hline
$t$ & $\mathbb{V}[X^{10}(t)]$ & $\mathbb{V}[X^{20}(t)]$ & $\mathbb{V}[X^{40}(t)]$ & $\mathbb{V}[X^{80}(t)]$ & MC $500,000$  \\ \hline
$0$ & $2.1$ & $2.1$ & $2.1$ & $2.1$ & $2.10094$ \\ \hline
$0.1$ & $1.81331$ & $1.81331$ & $1.81331$ & $1.81331$ & $1.81300$ \\ \hline
$0.2$ & $1.78089$ & $1.78089$ & $1.78089$ & $1.78089$ & $1.77973$ \\ \hline
$0.3$ & $2.43304$ & $2.43304$ & $2.43304$ & $2.43304$ & $2.43226$ \\ \hline
$0.4$ & $4.15996$ & $4.15996$ & $4.15996$ & $4.15996$ & $4.16027$ \\ \hline
$0.5$ & $7.12080$ & $7.12077$ & $7.12077$ & $7.12077$ & $7.12084$ \\ \hline
$0.6$ & $11.1844$ & $11.1838$ & $11.1838$ & $11.1838$ & $11.1812$ \\ \hline
$0.7$ & $16.1150$ & $16.1090$ & $16.1090$ & $16.1090$ & $16.1046$ \\ \hline
$0.8$ & $22.1109$ & $22.0920$ & $22.0932$ & $22.0932$ & $22.0965$ \\ \hline
$0.9$ & $31.1044$ & $31.4557$ & $31.6189$ & $31.6324$ & $31.6569$ \\ \hline
\end{tabular}
\caption{Approximation of the variance of the solution stochastic process. Example \ref{ex2}.}
\label{var2}
\end{center}
\end{table}

\end{example}

\begin{example} \label{ex3} \normalfont

We set a truncated Multinormal distribution for the random input parameters:
\[ (A,X_0,X_1)\sim\text{Multinormal}(\begin{pmatrix} 10 \\ -2 \\ 1 \end{pmatrix},\begin{pmatrix} 1 & 0.01 & -0.02 \\ 0.01 & 4 & 2 \\ -0.02 & 2 & 4 \end{pmatrix})|_{[6,14]\times\mathbb{R}\times\mathbb{R}}. \]
Since $X_0,X_1\in\leb^p(\Omega)$ for all $1\leq p<\infty$ and $A$ is bounded in $[6,14]$, Theorem~\ref{th} shows that the stochastic process $X(t)$ defined by (\ref{Xt})--(\ref{P1P2}) is the unique $\leb^p(\Omega)$ solution to (\ref{RDE}) on $(-1,1)$, for each $1\leq p<\infty$. Analogously to the previous two examples, Table~\ref{exp3} and Table~\ref{var3} show the results. Observe that stabilization of the results for $t\leq 0.7$ is achieved for $M=80$. Notice also that, for $M\leq 20$ and $t\geq0.4$, the approximation of the expectation and variance is not good. The results obtained from the Fr\"obenius method for $M\geq 80$ agree with the statistics calculated via Monte Carlo simulations. On the other hand, the approximations performed by gPC expansions are not good. This is due to the accumulation of numerical errors, which invalidates the corresponding results. See \cite{shi,nostre} for an analysis of computational errors when working with gPC expansions. Thus, the Fr\"obenius method proves to be the best uncertainty quantification technique for this example.

\begin{table}[hbtp!]
\footnotesize
\tabcolsep=0.07cm
\begin{center}
\begin{tabular}{|c|c|c|c|c|c|c|c|} \hline
$t$ & $\mathbb{E}[X^{10}(t)]$ & $\mathbb{E}[X^{20}(t)]$ & $\mathbb{E}[X^{40}(t)]$ & $\mathbb{E}[X^{80}(t)]$ & MC $500,000$ & gPC $m=3$ & gPC $m=4$  \\ \hline
$0$ & $-2.01642$ & $-2.01642$ & $-2.01642$ & $-2.01642$ & $-2.00100$ & $-2.01642$ & $-2.01642$ \\ \hline
$0.1$ & $-0.905676$ & $-0.905676$ & $-0.905676$ & $-0.905676$ & $-0.905209$ & $-0.917349$ & $-0.91765$ \\ \hline
$0.2$ & $1.10885$ & $1.10884$ & $1.10884$ & $1.10884$ & $1.11031$ & $1.10512$ & $1.10054$ \\ \hline
$0.3$ & $1.94955$ & $1.94909$ & $1.94909$ & $1.94909$ & $1.94966$ & $1.94172$ & $1.93194$ \\ \hline
$0.4$ & $0.656784$ & $0.643176$ & $0.643176$ & $0.643176$ & $0.641893$ & $0.616062$ & $1.26544$ \\ \hline
$0.5$ & $-1.20831$ & $-1.39804$ & $-1.39804$ & $-1.39804$ & $-1.39941$ & $-1.34976$ & $16.2519$ \\ \hline
$0.6$ & $0.111123$ & $-1.57901$ & $-1.57903$ & $-1.57903$ & $-1.57838$ & $-0.565463$ & $286.226$ \\ \hline
$0.7$ & $10.8410$ & $0.602665$ & $0.602084$ & $0.602084$ & $0.594087$ & $8.20453$ & $1524.28$ \\ \hline
$0.8$ & $51.7915$ & $1.58890$ & $1.57617$ & $1.57615$ & $1.57588$ & $50.6284$ & $-211410$ \\ \hline
$0.9$ & $203.700$ & $-0.987211$ & $-1.20468$ & $-1.20776$ & $-1.20091$ & $291.704$ & $-2.51516\cdot 10^7$ \\ \hline
\end{tabular}
\caption{Approximation of the expectation of the solution stochastic process. Example \ref{ex3}.}
\label{exp3}
\end{center}
\end{table}

\begin{table}[hbtp!]
\footnotesize
\tabcolsep=0.07cm
\begin{center}
\begin{tabular}{|c|c|c|c|c|c|c|c|} \hline
$t$ & $\mathbb{V}[X^{10}(t)]$ & $\mathbb{V}[X^{20}(t)]$ & $\mathbb{V}[X^{40}(t)]$ & $\mathbb{V}[X^{80}(t)]$ & MC $500,000$ & gPC $m=3$ & gPC $m=4$  \\ \hline
$0$ & $3.96931$ & $3.96931$ & $3.96931$ & $3.96931$ & $4.00268$ & $3.96931$ & $3.96931$ \\ \hline
$0.1$ & $1.23016$ & $1.23016$ & $1.23016$ & $1.23016$ & $1.22715$ & $1.17839$ & $1.16808$ \\ \hline
$0.2$ & $1.16167$ & $1.16166$ & $1.16166$ & $1.16166$ & $1.14804$ & $0.816119$ & $-7.83909$ \\ \hline
$0.3$ & $3.86797$ & $3.87079$ & $3.87079$ & $3.87079$ & $3.86348$ & $-1.71661$ & $-494.045$ \\ \hline
$0.4$ & $1.72091$ & $1.76984$ & $1.76984$ & $1.76984$ & $1.76343$ & $-54.3357$ & $156807$ \\ \hline
$0.5$ & $2.59759$ & $2.75802$ & $2.75802$ & $2.75802$ & $2.71796$ & $-187.680$ & $1.98436\cdot 10^7$ \\ \hline
$0.6$ & $53.8179$ & $3.79667$ & $3.79665$ & $3.79665$ & $3.79030$ & $7387.73$ & $-9.73065\cdot 10^9$ \\ \hline
$0.7$ & $1774.74$ & $3.88379$ & $3.87941$ & $3.87941$ & $3.88103$ & $244717$ & $-1.46846\cdot 10^{12}$ \\ \hline
$0.8$ & $40373.8$ & $5.25517$ & $5.27273$ & $5.27282$ & $5.17336$ & $2.49059\cdot 10^6$ & $8.46091\cdot 10^{15}$ \\ \hline
$0.9$ & $658630$ & $4.79558$ & $7.67724$ & $7.76726$ & $7.73295$ & $-7.31059\cdot 10^8$ & $-9.69951\cdot 10^{19}$ \\ \hline
\end{tabular}
\caption{Approximation of the variance of the solution stochastic process. Example \ref{ex3}.}
\label{var3}
\end{center}
\end{table}

\end{example}

\section{Conclusions} \label{conclusions}

In this article we have studied the random Legendre differential equation with input coefficient $A$ and initial conditions $X_0$ and $X_1$. In [Calbo G. et al, Comput. Math. Appl., 61(9), 2782--2792 (2011)], a mean square convergent random power series solution $X(t)$ on $(-1/\e,1/\e)$ was constructed via the Fr\"obenius method. The authors proved that, under the assumption that the absolute moments of $A$ grow at most exponentially, under mean fourth integrability of $X_0$ and $X_1$, and under independence of $A$ and the initial conditions, the random power series becomes a mean square solution to the random Legendre differential equation on $(-1/\e,1/\e)$. We have extended this result by assuming less integrability of $X_0$ and $X_1$ and no independence between the random inputs. Moreover, the growth condition on the absolute moments of $A$ has been characterized in terms of the boundedness of $A$. This has permitted a simpler proof of our result, as no probabilistic inequalities (H\"older, $c_s$, etc.) have been required. Moreover, our random power series solution converges on the whole $(-1,1)$, as it occurs with its deterministic counterpart. We have provided expressions for the approximate expectation and variance of $X(t)$, by truncating the random power series. In the numerical examples, we have illustrated the improvements developed by working with non-independent random inputs. Our approach has improved the approximations done by Monte Carlo simulations and gPC expansions.

\section*{Acknowledgements}
This work has been supported by the Spanish Ministerio de Econom\'{i}a y Competitividad grant MTM2017--89664--P. Marc Jornet acknowledges the doctorate scholarship granted by Programa de Ayudas de Investigaci\'on y Desarrollo (PAID), Universitat Polit\`ecnica de Val\`encia.

\section*{Conflict of Interest Statement} 
The authors declare that there is no conflict of interests regarding the publication of this article.

\end{document}